\theoremstyle{plain}      
\newtheorem{thm}{Theorem}[section]     
\newtheorem{cor}[thm]{Corollary}
\newtheorem{lemma}[thm]{Lemma}     
\newtheorem{prop}[thm]{Proposition}
\theoremstyle{definition}      
\newtheorem{defn}[thm]{Definition}     
\newtheorem{example}[thm]{Example} 
\newtheorem{remark}[thm]{Remark}
\DeclareMathAlphabet{\doba}{U}{msb}{m}{n}
\def\Id{\mathrm{id}}
\def\T{\mathrm{T}}
\def\di{{\rm d}}
\def\tr{\mathrm{tr}}
\def\vol{{\mathrm{vol}}}
\def\ie{\emph{i.e.,} }
\let\p\partial
\newcommand{\definedas}{\mathrel{\raise.095ex\hbox{\rm :}\mkern-5.2mu=}}
\title{}
\author{Andrei Moroianu, Mihaela Pilca}
\address{Andrei Moroianu \\ Université Paris-Saclay, CNRS,  Laboratoire de mathématiques d'Orsay, 91405, Orsay, France, 
and Institute of Mathematics “Simion Stoilow” of the Romanian Academy, 21 Calea Grivitei, 010702 Bucharest, Romania}
\email{andrei.moroianu@math.cnrs.fr}
\address{Mihaela Pilca\\Fakult\"at f\"ur Mathematik\\
Universit\"at Regensburg\\Universit\"atsstr. 31 
D-93040 Regensburg, Germany}
\email{mihaela.pilca@mathematik.uni-regensburg.de}
\subjclass[2010]{53C18}
\keywords{Conformal product structures, Weyl connections, Kähler manifolds}
\begin{document}   
	
	\title{Conformal product structures on compact Kähler manifolds}

	\begin{abstract} A conformal product structure on a Riemannian manifold is a Weyl connection with reducible holonomy. We give the geometric description of all compact Kähler manifolds admitting conformal product structures. 
		
	\end{abstract}
	\maketitle

	\section{Introduction}
	
	Every Riemannian manifold $(M,g)$ carries a unique torsion-free linear connection $\nabla$ compatible with the Riemannian metric, called the Levi-Civita connection. More generally, one can consider torsion-free linear connections on $M$, compatible with the conformal structure defined by the metric $g$. These are the so-called Weyl connections \cite{w1988} and they are in one-to-one correspondence with 1-forms on $M$ (see \cite{g1995}). The corresponding 1-form is called the Lee form of the Weyl connection with respect to $g$.
	
	Weyl connections can be {\em exact} (Levi-Civita connections of metrics conformal to $g$), {\em closed} (locally exact) or non-closed. In the first two cases  their restricted holonomy group is compact, whereas in the latter case, it is always non-compact. A classification of the possible restricted holonomy groups of Weyl connections can be obtained by the Berger-Simons Holonomy Theorem in the exact and closed cases, and by results of Merkulov and Schwachhöfer~\cite{ms1999} in the non-closed, irreducible case. 
	
	 Weyl connections with reducible holonomy define so-called {\em conformal product structures} \cite{bm2011}. The tangent bundle of a Riemannian manifold with conformal product structure can be written as the orthogonal direct sum of two integrable distributions, defining two conformal submersions in the neighborhood of each point. The {\em rank} of a conformal product structure is defined to be the smallest of the two dimensions of these distributions.
	 
	 When the Weyl connection defining a conformal product structure is closed but not exact, the structure is called locally conformally product (in short, LCP) \cite{f2024a}. The theory of LCP manifolds started with a first construction by Matveev and Nikolayevsky \cite{mn2015} giving a counterexample to a conjecture by Belgun and the first author \cite{bm2016}. A structure theorem was then obtained by Matveev and Nikolayevsky \cite{mn2017} in the analytic case and by Kourganoff \cite{k2019} in the general, smooth case. The study of compact LCP manifolds is currently a very active research topic (cf. \cite{adbm2024}, \cite{bfm2023}, \cite{dbm2024}, \cite{f2024b}, \cite{mp2024a}).
	 
	 A manifold with conformal product structure can be written locally as a product $M_1\times M_2$ and its Riemannian metric is of the form $e^{2f_1}g_1+e^{2f_2}g_2$, with $g_1$, $g_2$ Riemannian metrics on $M_1$, $M_2$ respectively, and $f_1,f_2\in C^\infty(M_1\times M_2)$ (cf. \cite{bm2011}, see also Proposition \ref{2.3} below). We thus see that {\em warped product metrics} (when $f_1\in C^\infty(M_2)$ and $f_2=0$) or {\em doubly warped product metrics} (when  $f_1\in C^\infty(M_2)$ and  $f_2\in C^\infty(M_1)$) are in fact very special cases of conformal product structures.
	 
	 In this paper we study conformal product structures on compact Kähler manifolds. This problem can be understood as a part of a wider program which intends to classify all compact Riemannian manifolds $(M,g)$ with special holonomy carrying a Weyl connection $D$ (different from the Levi-Civita connection) having special holonomy as well. Some parts of this program have been already carried out in \cite{m2019} when $D$ is exact, in \cite{bfm2023} and \cite{mmp2020} when $D$ is closed but non-exact, and in \cite{mp2024b} when $D$ is non-closed and reducible and $(M,g)$ is Einstein. 
	 
	The situation in real dimension 2 is very simple. We prove in Proposition \ref{dim2} that a compact 2-dimensional Kähler manifold (\ie an oriented Riemannian surface) carries a conformal product structure if and only if it is topologically a two-torus (\ie there is no restriction on the metric). 
	
	In our first main result (Theorem \ref{thm1}) we show that every conformal product structure on a compact Kähler manifold, different from the Levi-Civita connection, has rank 1. The argument goes roughly as follows: if $D$ is a Weyl connection on $(M,g)$ with reducible holonomy and $\T M=T_1\oplus T_2$ is a $D$-parallel orthogonal direct sum, then the orthogonal symmetry $S$ with respect to $T_2$ is $D$-parallel, so the curvature tensor of $D$ vanishes on $S$. This gives an expression for the Riemannian curvature tensor applied to $S$ in terms of the Lee form of $D$ and its covariant derivative. Using the symmetries of the Kähler curvature, taking suitable traces, and using the Cauchy-Schwarz inequality and Stokes' Theorem, we conclude that if the dimensions of $T_1$ and $T_2$ are at least 2, then the Lee form of $D$ with respect to $g$ vanishes.
	
	By similar arguments, we next show in Theorem \ref{thm2} that if $T_2$ has dimension 1 and the dimension of $M$ is $n\ge 4$, then the Lee form vanishes on $T_2$ (note that this result does not hold for $n=2$). 
	
	Finally, in Theorem \ref{thm3} we give two equivalent geometric characterizations of rank $1$ conformal product structures on compact K\"ahler manifolds: one  by providing an explicit expression of the pullback metric on the universal cover, and the second by the existence of a closed $1$-form of unit length vanishing on a $\nabla$-parallel complex subbundle of  $\T M$ of codimension $1$. Roughly speaking, we show that a compact Kähler manifold of real dimension at least 4 admits a conformal product structure other than the Levi-Civita connection if and only if it is locally isometric to a Riemannian product of a Kähler manifold of dimension $n-2$ and a two-torus carrying a geodesic foliation. 
	
	{\bf Acknowledgments.} This work was partly supported by the PNRR-III-C9-2023-I8 grant CF 149/31.07.2023 {\em Conformal Aspects of Geometry and Dynamics} and by the Procope Project No. 57650868 (Germany) / 48959TL (France). We warmly thank Vladimir Matveev for the example of a metric on the two-torus without geodesic foliation used in Proposition \ref{vm} below and to Thibault Lefeuvre for useful discussions. 
	
	\section{Preliminaries}
	\subsection{Weyl connections}
	Let $(M,g)$ be a Riemannian manifold with Levi-Civita connection denoted $\nabla$. We denote by $\sharp:\T^*M\to \T M$ and $\flat:\T M\to \T^* M$ the musical isomorphisms defined by the metric, which are $\nabla$-parallel and inverse to each other. In order to simplify notation, we will denote the metric $g$  by $\langle\cdot,\cdot\rangle$ and the associated norm by $\|\cdot\|$, when is no risk of confusion.

	\begin{defn}\label{weyl} A {\em Weyl connection} on $(M,g)$ is a torsion-free linear connection $D$  satisfying $Dg=-2\theta\otimes g$ for some $1$-form $\theta\in\Omega^1(M)$, called the {\em Lee form} of $D$ with respect to $g$. 
	\end{defn}
	
	The conformal analogue of the Koszul formula reads \cite{g1995}:
	\begin{equation}\label{ko} D_XY=\nabla_XY+\theta(Y)X+\theta(X)Y-\langle X, Y\rangle\theta^\sharp,\qquad\forall X,Y\in \Gamma(\T M).
	\end{equation}
	
	We thus see that there is a one-to-one correspondence between Weyl connections on $(M,g)$ and 1-forms on $M$: to each Weyl connection one associates its Lee form with respect to $g$ and to each 1-form one associates a Weyl connection by the formula \eqref{ko}.
	
	A Weyl connection is called closed (resp. exact) if its Lee form is closed (resp. exact). If $\theta=\di\varphi$ is exact, 
	$$D(e^{2\varphi}g)=e^{2\varphi}(2\di\varphi\otimes g+Dg)=e^{2\varphi}(2\theta\otimes g-2\theta\otimes g)=0,$$ 
	so $D$ is the Levi-Civita connection of $e^{2\varphi}g$. Thus exact Weyl connections are Levi-Civita connections of metrics in the conformal class of $g$, and closed Weyl connections are locally (but in general not globally) Levi-Civita connections of metrics conformal to $g$. 
	
	\subsection{Conformal product structures} Let $(M,g)$ be a Riemannian manifold of dimension $n\ge 2$.
	
	\begin{defn}\label{cp} A {\em conformal product structure} of rank $r\ge 1$ on $(M,g)$ is a Weyl connection $D$ together with an orthogonal $D$-parallel direct sum decomposition of the tangent bundle of $M$ as $\T M=T_1\oplus T_2$, with $\dim(T_1)\ge\dim(T_2)=r$. A conformal product structure $D$ on $(M,g)$ is {\em orientable} if the $D$-parallel distributions $T_1$ and $T_2$ are orientable.
	\end{defn}

Clearly, every conformal product structure induces by pull-back an orientable conformal product structure on some finite cover of $(M,g)$, with covering group contained in $(\mathbb{Z}/2\mathbb{Z})^2$. 
	
	Note that in \cite{bm2011} a different definition was given for conformal product structures, in terms of orthogonal conformal submersions (see also \cite{c2000}). However,  the two definitions are equivalent, by \cite[Theorem 4.3]{bm2011}. Moreover, a more concrete local characterization of the underlying Riemannian metrics on conformal products was indicated in \cite[\S 6.1]{bm2011}. For the sake of completeness, we restate it here:
	
	\begin{prop} \label{2.3} Let $(M_1,g_1)$ and $(M_2,g_2)$ be Riemannian manifolds, and denote by $M:=M_1\times M_2$ their product. For every $f_1,f_2\in C^\infty(M)$, consider the Riemannian metric $g:=e^{2f_1}g_1+e^{2f_2}g_2$ on $M$. Then there exists a unique Weyl connection which together with the decomposition $\T M=\T M_1\oplus \T M_2$ defines a conformal product structure on $(M,g)$. The Lee form of this Weyl connection with respect to $g$ is $\theta:=-\di^{M_1}f_1-\di^{M_2}f_2$.
	
	Conversely, a conformal product structure on $(M,g)$ is obtained by this construction in the neighborhood of each point. 
	\end{prop}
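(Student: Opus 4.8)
The plan is to exploit the bijection between Weyl connections and $1$-forms provided by \eqref{ko}: a Weyl connection $D$ is completely determined by its Lee form $\theta$, so both the existence and the uniqueness in the first assertion amount to showing that there is exactly one $\theta\in\Omega^1(M)$ for which the connection $D$ defined by \eqref{ko} makes the splitting $\T M=\T M_1\oplus\T M_2$ parallel. Writing $\mathrm{pr}_1,\mathrm{pr}_2$ for the $g$-orthogonal projections onto $\T M_1,\T M_2$, the parallelism of $\T M_1$ is the requirement $\mathrm{pr}_2(D_XY)=0$ for all $X\in\Gamma(\T M)$ and $Y\in\Gamma(\T M_1)$, and symmetrically for $\T M_2$. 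Via \eqref{ko} this reduces the whole question to understanding the off-diagonal part of the Levi-Civita connection $\nabla$ of $g=e^{2f_1}g_1+e^{2f_2}g_2$.

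First I would compute $\nabla$ from the Koszul formula, working with coordinate vector fields tangent to the two factors, whose mutual Lie brackets vanish. The key inputs are the mixed components: for $X,Y$ tangent to one factor, the component of $\nabla_XY$ tangent to the other factor turns out to be proportional to $\langle X,Y\rangle$ times a gradient of one of the functions $f_1,f_2$ along that other factor, while the diagonal components reproduce the (conformally rescaled) intrinsic connections on each factor. Substituting into \eqref{ko} and using $\mathrm{pr}_2(X)=0$ for $X\in\Gamma(\T M_1)$, the condition $\mathrm{pr}_2(D_XY)=0$ collapses, for $X,Y\in\Gamma(\T M_1)$, to the single requirement that the $\T M_2$-block of $\theta$ cancel this off-diagonal gradient; the symmetric computation fixes the $\T M_1$-block. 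Letting $X$ additionally range over the complementary factor produces further scalar constraints, and the decisive check is that all of them are mutually compatible, so that one $1$-form satisfies them simultaneously. Granting this, $\theta$ is determined uniquely as a combination of the partial differentials of $f_1$ and $f_2$, which a short verification identifies with the Lee form in the statement; uniqueness is then immediate from the bijection recalled above.

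For the converse, let $D$ be a Weyl connection admitting a $D$-parallel orthogonal splitting $\T M=T_1\oplus T_2$. Since $D$ is torsion-free, for $X,Y\in\Gamma(T_1)$ one has $[X,Y]=D_XY-D_YX\in\Gamma(T_1)$, so $T_1$ is involutive, and likewise $T_2$; by the Frobenius theorem $M$ is locally a product $M_1\times M_2$ with $T_i=\T M_i$. It then remains to recognize the induced metric as being of the asserted form $e^{2f_1}g_1+e^{2f_2}g_2$, which is precisely the local normal form of a conformal product recorded in \cite[Theorem 4.3]{bm2011} and \cite[\S 6.1]{bm2011}; alternatively it follows by reversing the computation above, the parallelism of the splitting forcing the conformal class of the metric restricted to the leaves of $T_1$ (resp. $T_2$) to be constant along $T_2$ (resp. $T_1$). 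I expect the main obstacle to lie in the first step, namely the bookkeeping of which function enters each mixed component of $\nabla$ together with the verification that the several scalar conditions on $\theta$ are mutually consistent, rather than in any conceptual difficulty.
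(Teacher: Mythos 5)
Your proposal is correct and follows essentially the same route as the paper: the Koszul formula gives $g(\nabla_{X_1}Y_1,Z_2)=-Z_2(f_1)\,g(X_1,Y_1)$, substituting into \eqref{ko} forces $\theta=-\di^{M_1}f_1-\di^{M_2}f_2$, and the converse is Frobenius plus integrating the resulting ODE for $g|_{T_1}$ along $T_2$-directions, exactly as in the paper. The ``further scalar constraints'' you worry about (derivatives of $T_1$-sections in $T_2$-directions) reduce, because Weyl connections preserve orthogonality, to the same two conditions on $\theta$, so the consistency check is automatic.
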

	
	\begin{proof}
	Assume first that $(M,g)=(M_1\times M_2,e^{2f_1}g_1+e^{2f_2}g_2)$. 
	For $i\in\{1,2\}$ we will consider arbitrary vector fields $X_i,\ Y_i,\ Z_i$  on $M_i$, identified with their canonical extension to $M$. The Koszul formula immediately gives $g(\nabla_{X_1}Y_1,Z_2)=-\frac12Z_2(g(X_1,Y_1))=-Z_2(f_1)g(X_1,Y_1)$. Let $\theta$ be any 1-form on $M$ and let $D$ be the Weyl connection with Lee form $\theta$ with respect to $g$. By the previous formula together with \eqref{ko} we obtain
	\begin{equation*}\begin{split}g(D_{X_1}Y_1,Z_2)&=g(\nabla_{X_1}Y_1,Z_2)+g(\theta(Y_1)X_1,Z_2)+g(\theta(X_1)Y_1,Z_2)-g(X_1,Y_1)\theta(Z_2)\\
	&=-(Z_2(f_1)+\theta(Z_2))g(X_1,Y_1).
	\end{split}\end{equation*}
	Similarly, we also get $g(D_{X_2}Y_2,Z_1)=-(Z_1(f_2)+\theta(Z_1))g(X_2,Y_2).$ This shows that $\T M_1$ and $\T M_2$ are $D$-parallel if and only if $\theta(Z_2)=-Z_2(f_1)$ and $\theta(Z_1)=-Z_1(f_2)$ for every $Z_1\in\T M_1$ and $Z_2\in \T M_2$, which is equivalent to $\theta:=-\di^{M_1}f_1-\di^{M_2}f_2$.
	
	Conversely, assume that $D$ is a Weyl connection on $(M,g)$ with Lee form $\theta$, preserving an orthogonal decomposition $\T M=T_1\oplus T_2$ of the tangent bundle. Since $D$ is torsion-free, $T_1$ and $T_2$ are integrable, so by the Frobenius Theorem, every point $x$ of $M$ has a connected neighborhood $U$ diffeomorphic to a product $U_1\times U_2$, such that $\T U_1=T_1$ and $\T U_2=T_2$ at each point of $U$. We write $x=(x_1,x_2)$, identify $U_1$ with $U_1\times \{x_2\}$ and $U_2$ with $\{x_1\}\times U_2$ and denote by $g_1$ the Riemannian metric on $U_1$ obtained by restricting the metric $g$ to the submanifold $U_1\times \{x_2\}$ of $U$ and by $g_2$ the Riemannian metric on $U_2$ obtained by restricting the metric $g$ to $\{x_1\}\times U_2$.
	
	Consider like before arbitrary vector fields $X_i,\ Y_i,\ Z_i$  on $U_i$, identified with their canonical extension to $U=U_1\times U_2$. Using again the Koszul formula and \eqref{ko} we get 
	\begin{equation*}\begin{split}0&=g(D_{X_1}Y_1,Z_2)=g(\nabla_{X_1}Y_1,Z_2)+g(\theta(Y_1)X_1,Z_2)+g(\theta(X_1)Y_1,Z_2)-g(X_1,Y_1)\theta(Z_2)\\
	&=-\frac12 Z_2(g(X_1,Y_1))-\theta(Z_2)g(X_1,Y_1),
	\end{split}\end{equation*}
	showing that 
	\begin{equation}\label{gd} Z_2(g(X_1,Y_1))=-2\theta(Z_2)g(X_1,Y_1),\qquad \forall X_1,Y_1\in \Gamma(\T U_1),\ \forall Z_2\in \Gamma(\T U_2).
	\end{equation}
	For every point $y_2\in U_2$, let $\gamma:[0,1]\to U_2$ be a smooth curve with $\gamma(0)=x_2$ and $\gamma(1)=y_2$. We pick any $y_1\in U_1$ and denote by $f(t):=g(X_1,Y_1)_{(y_1,\gamma(t))}.$ By definition we have $f(0)=g_1(X_1,Y_1)_{y_1}$ and by \eqref{gd} applied to $Z_2=\dot\gamma$ we get $$f'(t)=-2\theta(\dot\gamma(t))f(t).$$ 
	
	Solving this differential equation gives 
	$$g(X_1,Y_1)_{(y_1,y_2)}=f(1)=e^{\int_0^1-2\theta(\dot\gamma(t))\di t}f(0)=e^{\int_0^1-2\theta(\dot\gamma(t))\di t}g_1(X_1,Y_1)_{y_1}.$$

	Since this formula holds for arbitrary vector fields $X_1,Y_1$ on $U_1$ and the function $e^{\int_0^1-2\theta(\dot\gamma(t))\di t}$ is independent of $X_1,Y_1$, we deduce that $g|_{T_1}$ is proportional to $g_1$ at every point $(y_1,y_2)\in U$, \ie there exists a function $f_1\in C^\infty(U)$ such that $g|_{T_1}=e^{2f_1}g_1$. Similarly, there exists a function $f_2\in C^\infty(U)$ such that $g|_{T_2}=e^{2f_2}g_2$, and since moreover $T_1$ and $T_2$ are orthogonal with respect to $g$, we finally obtain that $g=e^{2f_1}g_1+e^{2f_2}g_2$.
		\end{proof}
	
	\subsection{Metric characterization of conformal product structures}
Let $(M, g)$ be a Riemannian manifold. Recall that for any two vectors $X, Y\in\T M$, one can define a symmetric endomorphism $X\odot Y$ as follows: 
$$(X\odot Y)(Z):=\langle X,Z\rangle Y+\langle Y, Z\rangle X,\qquad\forall Z\in\T M.$$
We show the following metric characterization of  conformal product structures: 
\begin{lemma}\label{lemma_cps}	
	Let $(M, g)$  be a Riemannian manifold with Levi-Civita connection $\nabla$. The following assertions are equivalent:
	\vspace{-0.2cm}
	\begin{enumerate}[label=(\roman*)]
		\item There exists a conformal product structure on $(M,g)$.
		\item There exists a symmetric involution $S$ of $\T M$ different from $\pm\Id_{\T M}$ and a $1$-form $\theta$ on $M$, such that 
		\begin{equation}\label{derivS}
			\nabla_X S=SX\odot \theta^\sharp - S\theta^\sharp\odot X, \quad \forall X\in\Gamma(\T M).
		\end{equation}
		\end{enumerate}	
\end{lemma}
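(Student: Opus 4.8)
The plan is to read condition (ii) as a pointwise reformulation of the $D$-parallelism of the splitting in (i), so the first thing I would do is set up a dictionary between the two kinds of data. Given a conformal product structure as in (i), with Lee form $\theta$ and $D$-parallel orthogonal decomposition $\T M=T_1\oplus T_2$, I would let $S$ be the symmetric involution equal to $\Id$ on $T_1$ and to $-\Id$ on $T_2$. Orthogonality of the splitting makes $S$ symmetric, and since $\dim T_1\ge\dim T_2\ge 1$ both eigenspaces are nonzero, whence $S\ne\pm\Id$. Conversely, from a symmetric involution $S\ne\pm\Id$ I would recover an orthogonal splitting $\T M=T_1\oplus T_2$ as the $(+1)$- and $(-1)$-eigenbundles of $S$, both nontrivial by hypothesis (relabelling if necessary so that $\dim T_1\ge\dim T_2$). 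Under this dictionary $T_1$ and $T_2$ are $D$-parallel if and only if $S$ is $D$-parallel, i.e. $DS=0$.

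The heart of the argument is then a single computation relating $DS$ to $\nabla S$. I would fix vector fields $X,Y$ and expand $(D_X S)(Y)=D_X(SY)-S(D_X Y)$ using the conformal Koszul formula \eqref{ko} for both $D_X(SY)$ and $D_X Y$. The two copies of $\theta(X)SY$ produced by the expansion cancel, leaving
\begin{equation*}
(D_X S)(Y)=(\nabla_X S)(Y)+\theta(SY)X-\langle X,SY\rangle\theta^\sharp-\theta(Y)SX+\langle X,Y\rangle S\theta^\sharp.
\end{equation*}
Using the symmetry of $S$ to rewrite $\theta(SY)=\langle S\theta^\sharp,Y\rangle$ and $\langle X,SY\rangle=\langle SX,Y\rangle$, I would recognise the four correction terms as exactly $-(SX\odot\theta^\sharp-S\theta^\sharp\odot X)(Y)$ straight from the definition of $\odot$. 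This yields the identity $D_X S=\nabla_X S-(SX\odot\theta^\sharp-S\theta^\sharp\odot X)$, valid for any symmetric $S$ and any $1$-form $\theta$, so that $DS=0$ is equivalent to \eqref{derivS}.

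With this identity in hand the equivalence follows by assembling the pieces. For (i)$\Rightarrow$(ii) I take $\theta$ to be the Lee form of $D$ and $S$ the reflection above: then $DS=0$ gives \eqref{derivS}. For (ii)$\Rightarrow$(i) I let $D$ be the Weyl connection with Lee form $\theta$ defined by \eqref{ko}; the identity converts \eqref{derivS} into $DS=0$, so the eigenbundles of $S$ constitute a $D$-parallel orthogonal decomposition and hence a conformal product structure.

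I expect the only delicate point to be the bookkeeping in the core computation, namely checking that the correction terms produced by the two applications of \eqref{ko} recombine precisely into the symmetric-product expression on the right-hand side of \eqref{derivS}; this hinges on invoking the symmetry of $S$ at the right moments. Everything else — the eigenbundle correspondence and the handling of the $S\ne\pm\Id$ and rank conditions — is routine linear algebra.
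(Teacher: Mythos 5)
Your proposal is correct and follows essentially the same route as the paper: the same involution $S$ built from the eigenbundle splitting, the same expansion of $(D_XS)(Y)$ via the conformal Koszul formula, and the same use of the symmetry of $S$ to recognise the correction terms as $-(SX\odot\theta^\sharp-S\theta^\sharp\odot X)(Y)$. Packaging the computation as the general identity $D_XS=\nabla_XS-(SX\odot\theta^\sharp-S\theta^\sharp\odot X)$ is a slightly cleaner way of running the argument in both directions at once, but it is the same proof.
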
	
\begin{proof}
$(i) \Longrightarrow (ii)$ Assume that $(M,g)$ is endowed with a conformal product structure $D$ whose orthogonal $D$-parallel decomposition is $\T M=T_1\oplus T_2$.
We define $S$ to be the symmetric involution of $\T M$ given by:
\begin{equation}\label{S}
S|_{T_1}=\Id_{T_1}, \quad S|_{T_2}=-\Id_{T_2}.
\end{equation}
Denoting by $\theta$ the Lee form of $D$ with respect to $g$, we compute for all $X,Y\in \Gamma(\T M)$:
 \begin{equation}\label{weylS1}
 	\begin{split}
 		(D_X S)(Y)=&D_X(SY)-S(D_X Y)\\
 		\stackrel{\eqref{ko}}{=}&\nabla_X(SY)+\theta(SY)X+\theta(X)SY-\langle X, SY\rangle\theta^\sharp\\
 		&-S(\nabla_X Y+\theta(Y)X+\theta(X)Y-\langle X, Y\rangle\theta^\sharp)\\
 		=&(\nabla_X S)(Y)+\theta(SY)X-\langle X, SY\rangle\theta^\sharp-\theta(Y)SX+\langle X, Y\rangle S\theta^\sharp.
 	\end{split}	
 \end{equation}	
Since the endomorphism $S$ is $D$-parallel and $S$ is symmetric, \eqref{weylS1} gives:
\begin{equation}\label{weylS2}
	\begin{split}
		(\nabla_X S)(Y)&=\langle SX, Y\rangle\theta^\sharp+\theta(Y)SX-g(S\theta^\sharp, Y)X-\langle X, Y\rangle S\theta^\sharp,\qquad\forall X,Y\in \Gamma(\T M).
	\end{split}	
\end{equation}	
Using the notation introduced above for the symmetric product, \eqref{weylS2} is equivalent to \eqref{derivS}.

$(ii) \Longrightarrow (i)$ Conversely, let $S$ be a symmetric involution of $\T M$ different from $\pm\Id_{\T M}$ and let $\theta$ be a $1$-form, such that the covariant derivative of $S$ is given by the formula \eqref{derivS}. If $T_1$ denotes the $(+1)$-eigenbundle of $S$ and $T_2$ the $(-1)$-eigenbundle of~$S$, then we have the direct sum decomposition $\T M=T_1\oplus T_2$. Note that this decomposition is non-trivial because $S\neq\pm\Id_{\T M}$  and it is orthogonal, because $S$ is a symmetric involution, so in particular an orthogonal endomorphism. Let $D$ be the Weyl connection whose  Lee $1$-form with respect to $g$ is $\theta$. Performing the above computations backwards shows that \eqref{derivS} implies $D_X S=0$, for all $X\in \Gamma(\T M)$, so that the eigenbundles $T_1$ and $T_2$ are $D$-parallel. Hence, the Weyl connection $D$ together with the orthogonal $D$-parallel decomposition $\T M=T_1\oplus T_2$ define a conformal product structure on~$(M,g)$. 
\end{proof}

\section{Conformal product structures on Riemannian surfaces}
A Kähler manifold of real dimension 2 is just an oriented Riemannian surface.
In this section we characterize compact oriented Riemannian surfaces admitting conformal product structures (which by definition are necessarily of rank 1). The result is very simple: 

\begin{prop}\label{dim2} A compact oriented Riemannian surface $(M,g)$ carries a conformal product structure if and only if its genus is equal to $1$. 
\end{prop}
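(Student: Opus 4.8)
The plan is to establish the two implications separately, the ``only if'' direction being topological and the ``if'' direction an explicit construction through Lemma~\ref{lemma_cps}.

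For the necessity, suppose $(M,g)$ carries a conformal product structure. By Definition~\ref{cp} its rank must be $1$, so $\T M=T_1\oplus T_2$ with both $T_1$ and $T_2$ of rank $1$. In particular $T_1\subset\T M$ is a nowhere-vanishing line field on the closed surface $M$. If $T_1$ is orientable it admits a global unit section, hence a nowhere-vanishing vector field, and the Poincaré--Hopf theorem yields $\chi(M)=0$; if $T_1$ is not orientable, its orientation double cover $\hat M\to M$ carries a nowhere-vanishing vector field, so $0=\chi(\hat M)=2\chi(M)$ and again $\chi(M)=0$. Since $M$ is oriented, $\chi(M)=2-2g$, so $\chi(M)=0$ forces the genus $g$ to equal $1$.

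For the sufficiency, I take $M$ to be a torus endowed with an arbitrary metric $g$, and let $J$ denote the complex structure determined by $g$ and the orientation; being the Kähler structure of the surface, it is parallel, $\nabla J=0$. Since $\chi(M)=0$, there exists a global unit vector field $u$ on $M$, and I define the symmetric involution
\begin{equation*}
S:=u\otimes u^\flat-(Ju)\otimes(Ju)^\flat,
\end{equation*}
whose $(\pm 1)$-eigenbundles are $\mathbb{R}u$ and $\mathbb{R}(Ju)$, so that $S\neq\pm\Id_{\T M}$. Because $u$ has unit length and $\nabla J=0$, there is a $1$-form $\beta$ with $\nabla_Xu=\beta(X)Ju$ for all $X$, and I set $\theta:=-\beta\circ J$, so that $\theta(u)=-\beta(Ju)$ and $\theta(Ju)=\beta(u)$.

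It then remains to verify \eqref{derivS} for the pair $(S,\theta)$ and to invoke Lemma~\ref{lemma_cps}. A direct computation using $\nabla J=0$ and $\nabla_Xu=\beta(X)Ju$ gives $\nabla_XS=2\beta(X)\,u\odot(Ju)$. On the other hand, decomposing $X=\langle X,u\rangle u+\langle X,Ju\rangle Ju$ in the orthonormal frame $(u,Ju)$ and expanding the symmetric products shows that the right-hand side of \eqref{derivS} equals $2\big(\langle X,u\rangle\theta(Ju)-\langle X,Ju\rangle\theta(u)\big)\,u\odot(Ju)$, which by the choice of $\theta$ is precisely $2\beta(X)\,u\odot(Ju)$. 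Hence \eqref{derivS} holds and Lemma~\ref{lemma_cps} produces a conformal product structure on $(M,g)$. The substantive point is conceptual rather than computational: one must recognize that in real dimension $2$ the pointwise equation \eqref{derivS} can always be solved for $\theta$ once a line field (equivalently $S$) is fixed, so the only genuine obstruction to a \emph{global} conformal product structure is the topological one treated in the first paragraph, the bookkeeping above being routine.
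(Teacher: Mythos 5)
Your proof is correct and takes essentially the same route as the paper: the necessity argument via the line field and $\chi(M)=0$ is identical, and for sufficiency you use the same unit vector field $u$ with $\nabla_Xu=\beta(X)Ju$ and the same Lee form $\theta=-\beta\circ J$ (the paper's $J\alpha$), the only difference being that you verify condition \eqref{derivS} and invoke Lemma~\ref{lemma_cps}, whereas the paper checks directly via \eqref{ko} that the Weyl connection preserves $\mathbb{R}u$ and $\mathbb{R}Ju$. These verifications are equivalent by the proof of Lemma~\ref{lemma_cps}, so no substantive gap arises.
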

\begin{proof} If $(M,g)$ has a conformal product structure, its tangent bundle carries a 1-dimensional distribution. This shows that $M$ has vanishing Euler characteristic, so its genus is 1, \ie $M$ is a two-torus.

Conversely, if $M$ has genus 1, then it carries a nowhere vanishing vector field, so there exists a vector field $\xi$ which has unit length with respect to $g$. Let $J$ denote the Kähler structure of $(M,g)$ (which is well defined since $M$ is oriented). Then $\nabla_X\xi$ is orthogonal to $\xi$ for every $X\in \T M$, so there exists $\alpha\in\Omega^1(M)$ such that $\nabla_X\xi=\alpha(X)J\xi$ for every $X\in \T M$.

We claim that the Weyl connection on $(M,g)$ with Lee form $\theta:=J\alpha$ preserves the two orthogonal distributions $T_1:=\mathbb{R}\xi$ and $T_2:=\mathbb{R}J\xi$. Indeed, for every $X\in \T M$ we can write using \eqref{ko}
\begin{equation*}\begin{split}
	\langle D_X\xi,J\xi\rangle&=\langle\nabla_X\xi,J\xi\rangle+\theta(X)	\langle\xi,J\xi\rangle+\theta(\xi)\langle X,J\xi\rangle-\langle X,\xi\rangle\theta(J\xi)\\
&=\alpha(X)-\alpha(J\xi)\langle X,J\xi\rangle-\alpha(\xi)\langle X,\xi\rangle=0,
\end{split}\end{equation*}
since $\{\xi,J\xi\}$ is an orthonormal basis of $(\T M,g)$ at any point. Thus $D$ preserves the distribution $T_1$ and since every Weyl connection preserves orthogonality, $D$ also preserves $T_2$, showing that $(M,g)$ has a conformal product structure of rank 1.
\end{proof}
	
	\section{Conformal product structures on higher dimensional Kähler manifolds}
	
	In this section we describe compact Kähler manifolds $(M,g,J)$ of real dimension $n\ge 4$ carrying conformal product structures. By definition, a Riemannian manifold $(M,g)$ carries a conformal product structure with vanishing Lee form if and only if $(M,g)$ has reducible holonomy representation. We will thus restrict ourselves to Weyl connections different from the Levi-Civita connection of $g$, \ie with non-identically zero Lee form. Our first main result can be stated as follows:
	
	\begin{thm}\label{thm1} Let $(M,g,J)$ be a compact Kähler manifold. Then every conformal product structure on $(M,g)$ with non-identically zero Lee form has rank~$1$. 
\end{thm}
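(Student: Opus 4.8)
The plan is to establish the theorem by contradiction: I assume the conformal product structure $D$ has rank $r\ge 2$, so that the $D$-parallel orthogonal decomposition $\T M=T_1\oplus T_2$ satisfies $\dim T_1\ge\dim T_2=r\ge 2$, and I aim to show that this forces the Lee form $\theta$ to vanish identically. Since the rank is precisely $\dim T_2$, ruling out $\dim T_2\ge 2$ leaves only rank $1$, which is the assertion. (In real dimension $2$ there is nothing to prove, so the content is for $n\ge 4$, where rank $\ge 2$ is a priori possible.)

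First I would encode the structure algebraically via Lemma \ref{lemma_cps}: the decomposition $\T M=T_1\oplus T_2$ corresponds to the symmetric involution $S$ equal to $\pm\Id$ on $T_1,T_2$, which is $D$-parallel and satisfies \eqref{derivS}, namely $\nabla_X S=SX\odot\theta^\sharp-S\theta^\sharp\odot X$. Because $S$ is $D$-parallel, the curvature of $D$ annihilates $S$; equivalently, differentiating \eqref{derivS} once more and antisymmetrizing expresses the Riemannian curvature acting on $S$, viewed as a section of $\mathrm{End}(\T M)$, explicitly as $R(X,Y)\cdot S=[R(X,Y),S]=F(X,Y)$, where $F(X,Y)$ is an endomorphism written entirely through the $\odot$-products of the vectors $X,Y,SX,SY,\theta^\sharp,S\theta^\sharp$ and the derivatives $\nabla_X\theta^\sharp,\nabla_Y\theta^\sharp$. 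This is the promised expression of the Riemannian curvature on $S$ in terms of $\theta$ and $\nabla\theta$.

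Next I would exploit the Kähler symmetries of $R$. From $R(JX,JY)=R(X,Y)$ I obtain the pointwise identity $F(X,Y)=F(JX,JY)$, and from $R(X,Y)\circ J=J\circ R(X,Y)$ a further compatibility of $F$ with $J$; together these are algebraic–differential constraints linking the symmetric part of $\nabla\theta$ to quadratic expressions in $\theta$. I would then take suitable traces of $[R(X,Y),S]=F(X,Y)$, contracting over an orthonormal frame both directly and after inserting $J$. The frame contractions bring in the Ricci tensor of $g$; since $\Ric$ commutes with $J$ and the trace of a commutator vanishes, the Kähler symmetries arrange the Ricci contributions to cancel, leaving a clean relation that expresses the relevant trace of $\mathrm{Sym}\,\nabla\theta$ through $\|\theta\|^2$, $\langle S\theta^\sharp,\theta^\sharp\rangle$ and $\tr S=\dim T_1-\dim T_2$. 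In parallel I would compute the divergences of the vector fields $\theta^\sharp$ and $S\theta^\sharp$, whose integrals over the compact $M$ vanish by Stokes' Theorem and which reproduce precisely that trace of $\mathrm{Sym}\,\nabla\theta$.

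Finally I would combine these integral identities: substituting the traced curvature relation into $\int_M\mathrm{div}(\theta^\sharp)=0$ and $\int_M\mathrm{div}(S\theta^\sharp)=0$ eliminates all derivative terms and leaves $\int_M Q(\theta)=0$, where $Q$ is a pointwise quadratic expression in the $T_1$- and $T_2$-components of $\theta^\sharp$ whose coefficients involve $\dim T_1$ and $\dim T_2$. The role of the hypothesis $\dim T_1,\dim T_2\ge 2$ is exactly that these coefficients, via the Cauchy–Schwarz inequality, make $Q$ non-negative and strictly positive wherever $\theta\ne 0$, forcing $\theta\equiv 0$ and contradicting the assumption of a non-identically zero Lee form. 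I expect the main obstacle to lie in the bookkeeping of the two middle steps — expanding $F$, imposing the Kähler symmetries, and choosing the traces so that the Ricci terms drop and the surviving pointwise quadratic term is definite — with the dimension hypothesis entering precisely at the sign of its coefficient.
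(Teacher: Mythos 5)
Your outline follows the same route as the paper: encode the splitting by the $D$-parallel symmetric involution $S$ via Lemma \ref{lemma_cps}, differentiate \eqref{derivS} to express $R_{X,Y}S$ purely in terms of $\theta$ and $\nabla\theta$, impose $R_{X,Y}S=R_{JX,JY}S$, contract twice over an orthonormal frame, convert the derivative terms into codifferentials, integrate over $M$, and finish with Cauchy--Schwarz. (One small remark: you do not actually need to handle Ricci terms — since both contractions are applied to the curvature-free right-hand side $F$, the curvature never appears explicitly.)

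There is, however, one genuine gap at the very last step. After integrating and applying the two Cauchy--Schwarz inequalities $|\langle\theta,S\theta\rangle|\le\|\theta\|^2$ and $\tr(JSJS)\ge -n$, the surviving quadratic term has coefficient $4r(r-2)$, which is strictly positive only for $r\ge 3$ and \emph{vanishes} for $r=2$. So your claim that the hypothesis $\dim T_2\ge 2$ makes the integrand strictly positive wherever $\theta\ne 0$ fails precisely at the borderline rank $r=2$, and the argument as you describe it does not close there. An additional idea is needed: for $r=2$ one has equality throughout, in particular equality in $\tr(JSJS)\ge -\|JS\|\,\|SJ\|=-n$, which forces $SJ=JS$ and hence $JSJS=-\Id_{\T M}$. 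Feeding this back into the trace formula for $\tr(JSJST)$ produces a second pointwise identity, $n\|\theta\|^2=\tr(S)\langle\theta,S\theta\rangle$, and only then does a further application of Cauchy--Schwarz (using $\tr(S)=n-2r$) yield $0\ge 2r\|\theta\|^2$ and hence $\theta\equiv 0$. Without this equality analysis the case $r=2$ remains open.
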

	
	\begin{proof}
Let $(M,J,g)$  be a compact K\"ahler manifold of real dimension $n$ endowed with a conformal product structure of rank $r$ with non-identically zero Lee form. Note that for $n=2$ we have automatically $r=1$, so the result is non-empty only for $n\ge 4$.

As before, we denote by $D$ the associated Weyl connection, by $\theta$ its Lee form with respect to $g$ and consider the orthogonal $D$-parallel decomposition $\T M=T_1\oplus T_2$ with $\dim(T_1)\geq \dim(T_2)=r$. Let $S$ be the $D$-parallel symmetric involution defined by Lemma \ref{lemma_cps}. We first compute $R_{X,Y} S$, where $R$ is the Riemannian curvature tensor of $g$.  In order to simplify notation, we identify in the computations below vectors and 1-forms using the metric $g$ and denote by \mbox{$T:=\nabla\theta$} the endomorphism defined by the covariant derivative of the Lee form $\theta$. For any vector fields $X$ and $Y$ which are parallel at the point where the computation is done, we obtain:
\begin{equation*}
	\begin{split}
		\nabla^2_{X,Y} S\overset{\eqref{derivS}}{=}&\nabla_X(SY\odot \theta - S\theta\odot Y)\\
		=&(\nabla_X S)Y\odot \theta +SY \odot  TX
		-(\nabla_X S)\theta\odot Y-STX\odot Y\\
		\overset{\eqref{derivS}}{=}&(SX\odot \theta - S\theta\odot X)(Y)\odot \theta+SY \odot  TX\\
		&-(SX\odot \theta - S\theta\odot X)(\theta)\odot Y-STX\odot Y\\
		=&\<SX,Y\>\theta\odot\theta+\<\theta, Y\>SX\odot \theta-\<S\theta, Y\>X\odot\theta-\<X,Y\>S\theta\odot \theta +SY \odot  TX \\
		&-\<SX,\theta\>\theta\odot Y-\|\theta\|^2SX\odot Y+\<S\theta, \theta\>X\odot Y+\<X,\theta\>S\theta \odot Y -STX\odot Y.
	\end{split}	
\end{equation*}	
Since $R_{X,Y} S=\nabla^2_{X,Y} S-\nabla^2_{Y,X} S$, exchanging $X$ and $Y$ in the above formula, and using the symmetry of $S$ yields after some simplifications:
\begin{equation}\label{curvS}
	\begin{split}
		R_{X,Y} S	=&SY \odot  TX-SX \odot  TY+STY\odot X-STX\odot Y\\
		&+\<\theta, Y\>SX\odot \theta-\<\theta, Y\>S\theta \odot X+\<\theta, X\>S\theta \odot Y\\
		&-\<\theta, X\>SY\odot \theta -\|\theta\|^2(SX\odot Y-SY\odot X).
	\end{split}	
\end{equation}	

Applying this formula to $JX$ and $JY$, we also obtain:	
\begin{equation}\label{curvJ_S}
	\begin{split}
		R_{JX,JY} S=&SJY \odot  TJX-SJX \odot  TJY+STJY\odot JX-STJX\odot JY\\
		&+\<\theta, JY\>SJX\odot \theta-\<\theta, JY\>S\theta \odot JX+\<\theta, JX\>S\theta \odot JY\\
		&-\<\theta, JX\>SJY\odot \theta-\|\theta\|^2(SJX\odot JY-SJY\odot JX).
	\end{split}	
\end{equation}	

Since $R$ is the curvature of a K\"ahler metric, we have that $R_{X,Y}S=R_{JX, JY}S$ for every $X,Y\in \T M$. Let $\{e_i\}_{i=1,n}$  be a local orthonormal basis of $\T M$. For every $Y\in \T M$ we have $\displaystyle \sum_{i=1}^n(R_{e_i, Y}S)(e_i)=\sum_{i=1}^n(R_{Je_i, JY}S)(e_i)$, which
according to \eqref{curvS} and \eqref{curvJ_S} yields:
\begin{equation*}
	\begin{split}
		&TSY +\mathrm{tr}(T) SY-\mathrm{tr}(S) TY-STY+(n+1)STY-\mathrm{tr}(ST) Y-STY\\
		&+\<\theta, Y\>(\mathrm{tr}(S)\theta+S\theta)-(n+1)\<\theta, Y\>S\theta+\<S\theta, \theta\>Y+\<\theta,Y\>S\theta-\<SY,\theta\>\theta-\|\theta\|^2SY\\
		&-\|\theta\|^2(\mathrm{tr}(S)Y+SY-(n+1)SY)\\
		=&TJSJY+\mathrm{tr}(TJ)SJY-\mathrm{tr}(SJ)TJY-SJTJY+JSTJY-\mathrm{tr}(STJ)JY+STY\\
		&+\<\theta, JY\>(\mathrm{tr}(SJ)\theta+SJ\theta)-\<\theta, JY\>JS\theta-\<S\theta, J\theta\>JY-\<\theta,Y\>S\theta+\<J\theta, SJY\>\theta\\
		&-\|\theta\|^2(\mathrm{tr}(SJ)JY-SY-JSJY).
	\end{split}	
\end{equation*}		
Using the fact that $\mathrm{tr}(SJ)=0$, the above equality reads after simplification:
\begin{equation}\label{curv_Y_S}
	\begin{split}
		&TSY +\mathrm{tr}(T) SY-\mathrm{tr}(S) TY+(n-1)STY-\mathrm{tr}(ST) Y\\
		&+\<\theta, Y\>(\mathrm{tr}(S)\theta+(1-n)S\theta)+\<S\theta, \theta\>Y-\<SY,\theta\>\theta\\
		&+\|\theta\|^2((n-1)SY-\mathrm{tr}(S)Y)\\
		=&TJSJY+\mathrm{tr}(TJ)SJY-SJTJY+JSTJY-\mathrm{tr}(STJ)JY+STY\\
		&-\<J\theta, Y\>SJ\theta+\<J\theta, Y\>JS\theta-\<S\theta, J\theta\>JY-\<\theta,Y\>S\theta-\<JSJ\theta, Y\>\theta\\
		&+\|\theta\|^2(SY+JSJY)
	\end{split}	
\end{equation}		

Taking the scalar product with $SY$ in this formula, and summing over  a local orthonormal basis $Y=e_i$  yields:
\begin{equation*}
	\begin{split}
		&\mathrm{tr}(T)+n\mathrm{tr}(T)-\mathrm{tr}(S)\mathrm{tr}(ST)+(n-1)\mathrm{tr}(T)-\mathrm{tr}(ST)\mathrm{tr}(S)\\&+\<\theta, S\theta\>\tr(S)+(1-n)\|\theta\|^2+\<S\theta, \theta\>\tr(S)-\|\theta\|^2+\|\theta\|^2(n(n-1)-(\tr(S))^2)\\
		=&\tr(STJSJ)+\tr(T)+\tr(SJSTJ)+\tr(T)-\|\theta\|^2-\<SJSJ\theta, \theta\>-\|\theta\|^2-\<SJSJ\theta, \theta\>\\
		&+\|\theta\|^2(n+\tr(SJSJ)),
	\end{split}	
\end{equation*}			
which further simplifies to
\begin{equation}\label{trace}
	\begin{split}
		0=&2(n-1)\mathrm{tr}(T)-2\mathrm{tr}(S)\mathrm{tr}(ST)-2\tr(JSJST)+2\tr(S)\<\theta, S\theta\>\\&+2\<\theta, JSJS\theta\>+\|\theta\|^2\left(n^2-3n+2-(\tr(S))^2-\tr(JSJS)\right).
	\end{split}	
\end{equation}		

We will now interpret the traces occurring in \eqref{trace} in terms of codifferentials of $1$-forms on $(M,g)$.
\begin{lemma}\label{lemmatraces}
	The traces of the endomorphisms $T$, $ST$ and $JSJST$ of $\T M$ can be expressed as follows:
	\vspace{-0.3cm}
	\begin{enumerate}
		\item[(i)] $\tr(T)=-\delta\theta$.
		\item[(ii)]  $\tr(ST)=-\delta(S\theta)-\tr(S)\|\theta\|^2+n\<\theta, S\theta\>$.
		\item[(iii)]  $\tr(JSJST)=-\delta(JSJS\theta)+\<\theta, JSJS\theta\>+\|\theta\|^2-\|\theta\|^2\tr(JSJS)-\tr(S)\<\theta, S\theta\>$.
	\end{enumerate}
\end{lemma}	

\begin{proof}
	$(i)$ By the definition of $T:=\nabla\theta$, it follows that $\tr(T)=-\delta\theta$.
	
	$(ii)$ If $\{e_i\}_{i=1,n}$  is a local orthonormal basis of $\T M$, we compute:
		\begin{equation*}
			\begin{split}
				\tr(ST)&= \sum_{i=1}^n \<ST e_i, e_i\>=\sum_{i=1}^n \<S\nabla_{e_i}\theta, e_i\>=\sum_{i=1}^n \<\nabla_{e_i}(S\theta), e_i\>-\sum_{i=1}^n \<(\nabla_{e_i}S) \theta, e_i\>\\
				&\overset{\eqref{derivS}}{=}-\delta(S\theta)-\sum_{i=1}^n\<(Se_i\odot\theta -S\theta\odot e_i)(\theta), e_i\>\\
				&=-\delta(S\theta)-\sum_{i=1}^n\left(\<Se_i, \theta\>\<\theta,e_i\>+\|\theta\|^2\<Se_i, e_i\>-\<S\theta, \theta\>\<e_i, e_i\>-\<e_i,\theta\>\<S\theta, e_i\>\right)\\
				&=-\delta(S\theta)-\tr(S)\|\theta\|^2+n\<\theta, S\theta\>.
			\end{split}
		\end{equation*}
		
$(iii)$ In order to obtain the formula for the trace of $JSJST$ we start by computing separately:
		\begin{equation*}
			\begin{split}
				\<\nabla_{e_i}(SJS)\theta, Je_i\>=&\<(\nabla_{e_i}S)(JS \theta)+SJ(\nabla_{e_i}S)\theta, Je_i\>\\
				\overset{\eqref{derivS}}{=}&\<(Se_i\odot\theta -S\theta\odot e_i)(JS\theta)+SJ(Se_i\odot\theta -S\theta\odot e_i)(\theta), Je_i\>\\
				=&\<Se_i, JS\theta\>\<\theta, Je_i\>-\<\theta, JS\theta\>\<Se_i, Je_i\>-\<JS\theta, e_i\>\<S\theta, Je_i\>\\
				&+\<Se_i, \theta\>\<SJ\theta, Je_i\>+\|\theta\|^2\<SJSe_i, Je_i\>\\
				&-\<\theta, S\theta\>\<SJe_i, Je_i\>-\<e_i,\theta\>\<SJS\theta, Je_i\>.
			\end{split}
		\end{equation*}
		Summing up over $i$ and taking into account that $\tr(SJ)=0$ we obtain:
		$$\sum_{i=1}^n \< \nabla_{e_i}(SJS)\theta, Je_i\>=\<\theta, JSJS\theta\>+\|\theta\|^2-\|\theta\|^2\tr(JSJS)-\tr(S)\<\theta, S\theta\>.$$
		Using this, we then get:
		\begin{equation*}
			\begin{split}
				\tr(JSJST)&= \sum_{i=1}^n \<JSJST e_i, e_i\>=\sum_{i=1}^n \<JSJS\nabla_{e_i}\theta, e_i\>\\
				&=\sum_{i=1}^n \<\nabla_{e_i}(JSJS\theta), e_i\>-\sum_{i=1}^n \<(\nabla_{e_i}JSJS)\theta, e_i\>\\
				&=-\delta(JSJS\theta)+\sum_{i=1}^n \< \nabla_{e_i}(SJS)\theta, Je_i\>\\
				&= -\delta(JSJS\theta)+\<\theta, JSJS\theta\>+\|\theta\|^2-\|\theta\|^2\tr(JSJS)-\tr(S)\<\theta, S\theta\>.
			\end{split}
		\end{equation*}
\end{proof}

According to Lemma \ref{lemmatraces}, Equality \eqref{trace} reads
\begin{equation*}
	\begin{split}
		0=&-2(n-1)\delta(\theta)-2\mathrm{tr}(S)\left(-\delta(S\theta)-\tr(S)\|\theta\|^2+n\<\theta, S\theta\> \right)\\
		&-2\left(-\delta(JSJS\theta)+\<\theta, JSJS\theta\>+\|\theta\|^2-\|\theta\|^2\tr(JSJS)-\tr(S)\<\theta, S\theta\>\right)\\
		&+2\tr(S)\<\theta, S\theta\>+2\<\theta, JSJS\theta\>+\|\theta\|^2\left(n^2-3n+2-(\tr(S))^2-\tr(JSJS)\right),
	\end{split}	
\end{equation*}		
or, equivalently
\begin{equation}\label{trace2}
	\begin{split}
		0=&\|\theta\|^2\left(n^2-3n+(\tr(S))^2+\tr(JSJS)\right)-2(n-2)\tr(S)\<\theta, S\theta\>\\
		&-2(n-1)\delta(\theta)+2\mathrm{tr}(S)\delta(S\theta)+2\delta(JSJS\theta).
	\end{split}	
\end{equation}		

We now remark that  $JS$ and $SJ$ are orthogonal endomorphisms of $\T M$, and thus $\|JS\|=\|SJ\|=\sqrt{n}$. The Cauchy-Schwarz inequality then yields
\begin{equation}\label{ineq}
	\tr(JSJS)=\<JS, (JS)^t\>=-\<JS, SJ\>\geq -\|JS\|\|SJ\|= -n,
\end{equation}		
 Since $JS$ and $SJ$ have the same norm, equality holds in \eqref{ineq} if and only if $SJ=JS$, in which case we have $JSJS=JSSJ=-\mathrm{Id}_{\T M}$. 

We now integrate \eqref{trace2} over the compact manifold $M$ against the volume form $\vol^g$. Since $\tr(S)=n-2r$ is constant, the integral of each of the last three terms vanishes by Stokes' Theorem. Moreover, $n-r\geq r$ by assumption, so applying the Cauchy-Schwarz inequality $|\<\theta, S\theta\>|\leq \|\theta\|^2$ and the inequality \eqref{ineq} yields:
\begin{equation}\label{ineqtheta1}
	\begin{split}
		0=&\int_M \left(\|\theta\|^2\left(n^2-3n+(\tr(S))^2+\tr(JSJS)\right)-2(n-2)\tr(S) \<\theta, S\theta\>\right)\vol^g\\
		\geq& \int_M \left(n^2-3n+(n-2r)^2-n-2(n-2)(n-2r)\right)\|\theta\|^2\vol^g\\
		=&\int_M 4r(r-2)\|\theta\|^2\vol^g.
	\end{split}
\end{equation}

We now assume for a contradiction that $r\geq 2$ and distinguish the following two cases:

1) If $r\geq 3$, then $4r(r-2)>0$ and \eqref{ineqtheta1} implies that $\|\theta\|=0$, so $\theta$ vanishes everywhere on~$M$, contradicting the hypothesis.

2) If $r=2$, then equality holds in \eqref{ineqtheta1}, which implies in particular that $\tr(JSJS)=-n$. Thus we have equality in \eqref{ineq}, showing that $JSJS=-\mathrm{Id}_{\T M}$. Replacing this into the formula for the trace of $JSJST=-T$ in Lemma~\ref{lemmatraces}(iii), we obtain:
$$-\tr(T)=\delta(\theta)+n\|\theta\|^2-\tr(S)\<\theta, S\theta\>.$$
Since $\tr(T)=-\delta(\theta)$, it follows that $n\|\theta\|^2-\tr(S)\<\theta, S\theta\>=0$. The Cauchy-Schwarz inequality further yields
$$0=n\|\theta\|^2-\tr(S)\<\theta, S\theta\>\geq n\|\theta\|^2-(n-2r)\|\theta\|^2=2r\|\theta\|^2=4\|\theta\|^2,$$
which implies that $\theta$ vanishes everywhere on~$M$, contradicting the hypothesis in this case as well.

Consequently, the rank of the conformal product structure has to be $1$.
	 \end{proof}
	 We now obtain a further restriction on the Lee form of conformal product structures on compact Kähler manifolds when the real dimension is at least 4. 
	 
	 \begin{thm}\label{thm2} Let $(M,g,J)$ be a compact Kähler manifold of real dimension $n\ge 4$ and let $D$ be a conformal product structure of rank $1$ with $D$-parallel orthogonal decomposition $\T M=T_1\oplus T_2$, where $\dim(T_2)=1$. Then the Lee form of $D$ with respect to~$g$ vanishes on $T_2$. 
\end{thm}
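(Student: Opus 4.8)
The plan is to reuse the pointwise identity \eqref{trace2} established in the proof of Theorem \ref{thm1}, which holds for a conformal product structure of \emph{any} rank, and to exploit the special algebraic form of the endomorphism $JSJS$ in rank $1$. The crucial point is that in Theorem \ref{thm1} the term $\tr(JSJS)$ was only estimated from below by the generic Cauchy--Schwarz bound $\tr(JSJS)\ge -n$ of \eqref{ineq}; in rank $1$ this endomorphism can instead be computed \emph{exactly}, and its precise value turns \eqref{trace2} into an identity with no room left for a non-trivial $T_2$-component of $\theta$.

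First I would compute $\tr(JSJS)$ pointwise. Since the rank is $1$, the distribution $T_2$ is locally spanned by a unit vector field $\xi$, and $\{\xi,J\xi\}$ is an orthonormal frame of the $J$-invariant plane $E:=\mathrm{span}\{\xi,J\xi\}$, with $\xi\in T_2$ and $J\xi\in T_1$ (as $J\xi\perp\xi$ and $T_1=T_2^\perp$). For $v\in E^\perp$ both $v$ and $Jv$ lie in $T_1$, where $S$ acts as the identity, so $JSJSv=J^2v=-v$; and a direct computation using $S|_{T_1}=\Id$, $S|_{T_2}=-\Id$ gives $JSJS\xi=\xi$ and $JSJS(J\xi)=J\xi$. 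Hence $JSJS$ equals $\Id$ on the $2$-plane $E$ and $-\Id$ on $E^\perp$, so that
\begin{equation*}
\tr(JSJS)=2-(n-2)=4-n,\qquad \tr(S)=n-2.
\end{equation*}

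Substituting these two values into \eqref{trace2} (with $r=1$), the coefficient of $\|\theta\|^2$ becomes $n^2-3n+(n-2)^2+(4-n)=2(n-2)^2$ and the coefficient of $\<\theta,S\theta\>$ becomes $-2(n-2)\tr(S)=-2(n-2)^2$, so that
\begin{equation*}
0=2(n-2)^2\bigl(\|\theta\|^2-\<\theta,S\theta\>\bigr)-2(n-1)\delta\theta+2(n-2)\delta(S\theta)+2\delta(JSJS\theta).
\end{equation*}
I would then integrate this identity over the compact manifold $M$ against $\vol^g$. As $\tr(S)=n-2$ is constant, the last three terms are (constant multiples of) codifferentials of $1$-forms and thus integrate to zero by Stokes' Theorem. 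Writing $S=\Id-2P_2$, where $P_2$ denotes the orthogonal projection onto $T_2$, one has $\|\theta\|^2-\<\theta,S\theta\>=2\|P_2\theta^\sharp\|^2\ge 0$, so the integrated identity reads
\begin{equation*}
0=4(n-2)^2\int_M\|P_2\theta^\sharp\|^2\,\vol^g.
\end{equation*}
Since $n\ge 4$ forces $(n-2)^2>0$, this yields $P_2\theta^\sharp=0$ everywhere, that is, $\theta$ vanishes on $T_2$.

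There is no serious obstacle once \eqref{trace2} is available; the only point that must be handled with care is the pointwise evaluation of $JSJS$, together with the resulting cancellation that makes the coefficients of $\|\theta\|^2$ and $\<\theta,S\theta\>$ exactly opposite. It is precisely this exact value $\tr(JSJS)=4-n$, rather than the weaker bound $\ge -n$, that produces the nonnegative integrand $\|P_2\theta^\sharp\|^2$. As a consistency check, for $n=2$ the coefficient $2(n-2)^2$ vanishes and the argument yields no information, in agreement with the fact that the statement fails in real dimension $2$.
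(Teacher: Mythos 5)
Your proposal is correct and follows essentially the same route as the paper: the same pointwise computation showing $JSJS$ is $+\Id$ on $\mathrm{span}\{\xi,J\xi\}$ and $-\Id$ on its orthogonal complement (hence $\tr(JSJS)=4-n$), the same substitution into \eqref{trace2}, and the same integration by Stokes' Theorem yielding $\|\theta\|^2-\langle\theta,S\theta\rangle=2\|\theta|_{T_2}\|^2=0$. The only cosmetic difference is that you phrase the $T_2$-component via the projection $P_2$ and work with a local frame, whereas the paper passes to a double cover to get a global spanning vector field; neither affects the argument.
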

	
	\begin{proof}
Let $S$ be the symmetric endomorphism defined by \eqref{S}. Then $\tr(S)=n-2$. By replacing $M$ with a double cover if necessary, one can assume that $T_2$ is orientable. Let $\xi$ be a vector spanning $T_2$. Then $J\xi$ is orthogonal on $\xi$ and since $\dim(T_2)=1$ and $T_1$ and $T_2$ are orthogonal, it follows that $J\xi\in T_1$, so $S(J\xi)=J\xi$. We then compute:
$$JSJS(\xi)=JSJ(-\xi)=-JS(J\xi)=-J(J\xi)=\xi.$$
$$JSJS(J\xi)=JSJ(J\xi)=-JS(\xi)=J\xi.$$
For each $X\in T_1$ which is orthogonal to $J\xi$, it follows that $JX\in  T_1$ and we obtain:
$$JSJS(X)=JSJ(X)=J(JX)=-X.$$
We conclude that $JSJS$ is an involution of $\T M$ with $(+1)$-eigenspace of dimension $2$ and $(-1)$-eigenspace of  dimension $n-2$, so $\tr(JSJS)=4-n$. If $\theta$ denotes as above the Lee form of $D$ with respect to $g$, Equality~\eqref{trace2} reads in this case:
\begin{equation}\label{trace3}
	\begin{split}
		0=&2(n-2)^2(\|\theta\|^2-\<\theta, S\theta\>)-2(n-1)\delta(\theta)+2(n-2)\delta(S\theta)+2\delta(JSJS\theta).
	\end{split}	
\end{equation}		
Let us decompose $\theta=\theta_1+\theta_2$, where $\theta_i:=\theta|_{T_i}$, for $i\in\{1,2\}$. Then $S(\theta_1)=\theta_1$ and 	$S(\theta_2)=-\theta_2$ and thus we have: 
$$\|\theta\|^2-\<\theta, S\theta\>=(\|\theta_1\|^2+\|\theta_2\|^2)-(\|\theta_1\|^2-\|\theta_2\|^2)=2\|\theta_2\|^2.$$
Replacing this into \eqref{trace3} and integrating  over $M$ yields $0=\displaystyle\int_M 4(n-2)^2\|\theta_2\|^2\vol^g$. Since $n\geq 4$, we conclude that $\theta_2=0$, which means that the Lee form $\theta$ vanishes on $T_2$.
	 \end{proof}
	 We are now ready for the geometric characterization of conformal product structures on compact Kähler manifolds of dimension $n\ge 4$. From the previous result, we can restrict to the rank 1 case.
	 
	 	\begin{thm}\label{thm3} Let $(M,g,J)$ be a compact Kähler manifold of real dimension $n\ge 4$ with Levi-Civita connection denoted by $\nabla$. Then the following assertions are equivalent:
	 	
(i) $(M,g)$ carries an orientable conformal product structure of rank $1$.

(ii) The universal cover $(\widetilde M,\widetilde g)$ is isometric to $(\mathbb{R}^2\times K,e^{-2\varphi}\di s^2+\di t^2+g_K)$, where $(K,g_K)$ is a simply connected Kähler manifold, $\varphi\in C^\infty(\mathbb{R}^2)$, and every element of $\pi_1(M)$ has the form 
\begin{equation}\label{pi1}\gamma(s,t,x)=(\gamma_0(s),t+t_0,\gamma_K(x)),\ \qquad\forall (s,t)\in\mathbb{R}^2,\ \forall x\in K,
\end{equation}
for some function $\gamma_0\in C^\infty(\mathbb{R})$, some constant $t_0\in \mathbb{R}$ and some isometry $\gamma_K$ of $(K,g_K)$.

(iii) The tangent bundle of $M$ has a $\nabla$-parallel complex subbundle of codimension $1$, contained in the kernel of a closed $1$-form of unit length.
\end{thm}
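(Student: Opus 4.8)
The plan is to establish the cycle of implications $(i)\Rightarrow(iii)\Rightarrow(ii)\Rightarrow(i)$.

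\emph{Setup for $(i)\Rightarrow(iii)$.} By Theorems \ref{thm1} and \ref{thm2} the structure has rank $1$, with $D$-parallel decomposition $\T M=T_1\oplus T_2$, $\dim T_2=1$, and Lee form $\theta$ vanishing on $T_2$. Since the structure is orientable, $T_2$ has a global unit section $\xi$; set $\eta:=\xi^\flat$. As $D$ preserves $\mathbb{R}\xi$ and $\langle\xi,\xi\rangle=1$, one gets $D_X\xi=\theta(X)\xi$, which together with \eqref{ko} and $\theta(\xi)=0$ yields the recurrence
\begin{equation*}\nabla_X\xi=\langle\xi,X\rangle\theta^\sharp,\qquad\forall X\in\Gamma(\T M).\end{equation*}
Because $J\xi\in T_1$, I write $\theta^\sharp=cJ\xi+w$ with $c:=\theta(J\xi)$ and $w$ a section of $E:=\{\xi,J\xi\}^\perp$. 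This $E$ is $J$-invariant of complex codimension $1$, and the whole implication reduces to proving $w=0$: for then $\theta^\sharp\parallel J\xi$, the plane field $\mathbb{R}\xi\oplus\mathbb{R}J\xi$ and hence $E$ are $\nabla$-parallel, and since $\nabla_X(J\xi)=\langle\xi,X\rangle J\theta^\sharp$ gives $\di\zeta=\eta\wedge(Jw)^\flat$ for $\zeta:=(J\xi)^\flat$, the $1$-form $\zeta$ is closed, of unit length, and vanishes on $E$, which is exactly $(iii)$.

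\emph{The crux: $w=0$.} This is the main obstacle. Using $\nabla J=0$, the recurrence, and $\eta(Y)=\eta(JY)=0$ for $Y\in\Gamma(E)$, a short computation together with the Kähler identity $R_{JX,JY}=R_{X,Y}$ gives $R_{\xi,Y}\xi=R_{J\xi,JY}\xi=0$ for all $Y\in\Gamma(E)$. Reinserting this into the second covariant derivative of $\xi$ yields $\nabla_Y\theta^\sharp=\theta(Y)\theta^\sharp$, and separating components gives $\nabla_Y w=\langle w,Y\rangle w$ for every $Y\in\Gamma(E)$; in particular $\nabla_w w=\|w\|^2w$. A direct trace of $\nabla w$ over an adapted frame shows that the $\xi$- and $E$-contributions cancel, so $\delta(w^\flat)=0$. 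Consequently $\delta(\|w\|^2w^\flat)=-\langle\di\|w\|^2,w^\flat\rangle=-2\|w\|^4$, and integrating over the compact manifold $M$ gives $\int_M\|w\|^4\,\vol^g=0$, whence $w\equiv0$. (Note $E\neq0$ and $\theta(\xi)=0$ both require $n\ge4$, which is where the hypothesis enters.)

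\emph{$(iii)\Rightarrow(ii)$.} Since $E$ is $\nabla$-parallel and $J$-invariant, so is $E^\perp$. On the universal cover the de Rham theorem splits $(\widetilde M,\widetilde g)$ isometrically as $\Sigma\times K$, with $\T\Sigma=E^\perp$ an oriented Kähler surface and $\T K=E$ a simply connected Kähler manifold. The closed unit form $\omega$ is constant along $K$, hence descends to a closed, nowhere-zero unit $1$-form on $\Sigma$; as $\chi(\Sigma)=0$ we have $\Sigma\cong\mathbb{R}^2$, and integrating $\omega=\di t$ while taking $s$ along the orthogonal trajectories writes $\widetilde g=e^{-2\varphi}\di s^2+\di t^2+g_K$. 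The deck transformations are isometries preserving $\omega$ and the product; $t\circ\gamma=t+t_0$ (constant since $\di(t\circ\gamma-t)=0$), and $\gamma_*\partial_t=\partial_t$ forces the $s$-component to be independent of $t$, giving exactly the form \eqref{pi1}.

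\emph{$(ii)\Rightarrow(i)$.} On the model, writing $e^{-2\varphi}\di s^2+(\di t^2+g_K)$ as $e^{2f_2}g_2+e^{2f_1}g_1$ with $g_2=\di s^2$, $f_2=-\varphi$, $g_1=\di t^2+g_K$, $f_1=0$, Proposition \ref{2.3} produces a rank $1$ conformal product structure with decomposition $(\mathbb{R}\partial_t\oplus\T K)\oplus\mathbb{R}\partial_s$ (equivalently one verifies \eqref{derivS} in Lemma \ref{lemma_cps} for the involution $S$ equal to $+\Id$ on $\mathbb{R}\partial_t\oplus\T K$ and $-\Id$ on $\mathbb{R}\partial_s$). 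By uniqueness of the associated Weyl connection, every deck transformation of the form \eqref{pi1} preserves $T_1,T_2$ and hence the structure, so it descends to $M$. The hardest point of the argument is the vanishing $w=0$; the remaining effort is bookkeeping to make the de Rham splitting and the induced structures equivariant under $\pi_1(M)$.
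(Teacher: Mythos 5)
Your proposal is correct, and it differs from the paper's proof in two genuine ways. First, you run the cycle as $(i)\Rightarrow(iii)\Rightarrow(ii)\Rightarrow(i)$, whereas the paper proves $(i)\Rightarrow(ii)$ directly (building the splitting of $\widetilde M$ straight from the vector field $\xi$) and only then deduces $(iii)$; your order lets you quote $(iii)$ as the hypothesis for the de Rham splitting, which is slightly cleaner. Second, and more substantially, your treatment of the crux --- the vanishing of the component $w$ of $\theta^\sharp$ orthogonal to $\xi$ and $J\xi$ (the paper's $\theta_0$) --- is different. The paper extracts the relation $TY=\theta(Y)\theta^\sharp$ on $E$ from the Kähler curvature identity (its equation \eqref{t1}), but then invokes the identities $\delta \di^c+\di^c\delta=0$ and $\di\delta^c+\delta^c\di=0$ applied to $\xi^\flat$ to control $TJ\xi$ and $\xi(b)$, computes $\di(\|\theta_0\|^2)$, and concludes by evaluating at a maximum point of $\|\theta_0\|^2$. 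You instead stay entirely with the relation $\nabla_Yw=\langle w,Y\rangle w$ on $E$, observe that the adapted-frame trace gives $\delta(w^\flat)=0$ (the $-\|w\|^2$ from the $\xi$-direction cancelling the $+\|w\|^2$ from $E$), and integrate $\delta(\|w\|^2w^\flat)=-2\|w\|^4$ over $M$. I checked this computation and it is correct; it is more elementary (no $\di^c$, $\delta^c$ machinery, no maximum principle) and buys a shorter proof of the hardest step, at the price of not producing the auxiliary identities \eqref{t2}--\eqref{xib} that the paper reuses when writing down the metric on $M_0$. Two small points you should make explicit: in $(ii)\Rightarrow(i)$ the \emph{orientability} of the resulting structure is not automatic from Proposition \ref{2.3} alone, since a deck transformation could a priori reverse $\partial_s$; it follows because $T_2$ is spanned by $J\partial_t$, and $\partial_t$ (hence $J\partial_t$) is $\pi_1(M)$-invariant, so $T_2$ is globally trivialized on $M$. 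And in $(iii)\Rightarrow(ii)$ you should justify that the closed form is basic over $\Sigma$ (it follows from $\di\eta=0$ together with $\eta|_E=0$ and integrability of $E$), which you assert but do not prove; the paper's version of this step is comparably terse.
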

	
	\begin{proof}
$(i) \Longrightarrow (ii)$ Assume that $\T M=T_1\oplus T_2$ is an orthogonal $D$-parallel decomposition, with $\dim(T_2)=1$. By the orientability assumption, there exists a unit length vector field $\xi$ spanning $T_2$. Then $D_X \xi=\alpha(X)\xi$ for some 1-form $\alpha$ on $M$. Moreover,  $\theta|_{T_2}=0$ by Theorem \ref{thm2}, so $\theta(\xi)=0$. Using this, together with \eqref{ko}, we get 
	$$\alpha(X)\xi=\nabla_X\xi+\theta(\xi)X+\theta(X)\xi-\langle\xi,X\rangle\theta^\sharp=\nabla_X\xi+\theta(X)\xi-\langle X,\xi\rangle\theta^\sharp,$$
	whence $\nabla_X\xi=(\alpha-\theta)(X)\xi+g(X,\xi)\theta^\sharp$. Since $\xi$ has unit length, $g(\nabla_X\xi,\xi)=0$, so $\alpha-\theta=0$, thus showing that
	\begin{equation}\label{nxi} \nabla_X\xi=\langle X,\xi\rangle\theta^\sharp,\qquad\forall X\in \T M.
	\end{equation}
	
	Denoting as before by $T:=\nabla\theta^\sharp$ the endomorphism of $\T M$ defined by the covariant derivative of $\theta^\sharp$, we readily obtain from \eqref{nxi}:
	\begin{equation}\label{rxi} R_{X,Y}\xi=\theta(Y)\langle X,\xi\rangle\theta^\sharp-\theta(X)\langle Y,\xi\rangle \theta^\sharp+\langle Y,\xi \rangle TX-\langle X,\xi\rangle TY,\qquad\forall X,Y\in \T M.
	\end{equation}
	Using the fact that $R_{X,Y}\xi=R_{JX,JY}\xi$ since $(M,g,J)$ is Kähler, and applying \eqref{rxi} twice, for $(X,\xi)$ and $(JX,J\xi)$, we obtain:
	\begin{equation}\label{t1}-\theta(X)\theta^\sharp+TX-\langle X,\xi\rangle T\xi=\theta(J\xi)\langle JX,\xi\rangle\theta^\sharp-\langle JX,\xi \rangle TJ\xi,\qquad\forall X\in \T M.
		\end{equation}
	
	On the other hand, on any Kähler manifold we have $\di^c\delta+\delta \di^c=0$ and $\di\delta^c+\delta^c \di=0$. Introducing the notation  $b:=\theta(J\xi)$, we get from \eqref{nxi}:
\begin{equation}\label{dixi}\di\xi^\flat=\xi^\flat\wedge\theta,\qquad \di^c\xi^\flat=J\xi^\flat\wedge\theta,\qquad \delta\xi^\flat=-\theta(\xi)=0,\qquad \delta^c\xi^\flat=-\theta(J\xi)=-b,
\end{equation}
	whence using a local orthonormal basis $\{e_i\}_{i=1,n}$ of $\T M$ we infer:
	\begin{equation*}\begin{split}0&=\delta \di^c\xi^\flat=\delta(J\xi^\flat\wedge\theta)=-\sum_{i=1}^n e_i\lrcorner (\nabla_{e_i}(J\xi^\flat\wedge\theta))\\
	&=-\sum_{i=1}^n e_i\lrcorner (\langle e_i,\xi \rangle J\theta\wedge\theta+J\xi^\flat \wedge (Te_i)^\flat)=b\theta-(TJ\xi)^\flat+\tr(T)J\xi^\flat,
	\end{split}\end{equation*}
	showing that
	\begin{equation}\label{t2}
	TJ\xi=b\theta^\sharp+\tr(T)J\xi.
		\end{equation}
Similarly, we get from \eqref{dixi}
	\begin{equation*}\begin{split}0&=(\di\delta^c+\delta^c \di)\xi^\flat=-\di b+\delta^c(\xi^\flat\wedge\theta)=-\di b-\sum_{i=1}^n Je_i\lrcorner (\nabla_{e_i}(\xi^\flat\wedge\theta))\\
	&=-\di b-\sum_{i=1}^n Je_i\lrcorner (\xi^\flat \wedge (Te_i)^\flat)=-\di b+(TJ\xi)^\flat-\tr(JT)\xi^\flat,
	\end{split}\end{equation*}
	showing that
	\begin{equation}\label{t3}\di b=(TJ\xi)^\flat-\tr(JT)\xi^\flat.
	\end{equation}
	In particular, applying this to $\xi$ and using \eqref{t2} yields 
	\begin{equation}\label{xib}\xi(b)=-\tr(JT).
	\end{equation}
	From \eqref{t1} and \eqref{t2} we obtain
	\begin{equation}\label{t4}TX=\langle X,\xi \rangle T\xi+\theta(X)\theta^\sharp-\langle JX,\xi \rangle\tr(T)J\xi,\qquad\forall X\in \T M.
		\end{equation}
Taking the scalar product with $\theta^\sharp$ in \eqref{t4} yields for every $X\in \T M$:
\begin{equation}\label{xt}\begin{split}\frac12X(\|\theta\|^2)&=\langle TX,\theta^\sharp\rangle =\langle X,\xi\rangle \langle T\xi,\theta^\sharp\rangle+\theta(X)\|\theta\|^2-\langle JX,\xi\rangle\tr(T)\theta(J\xi)\\
&=\frac12\langle X,\xi\rangle\xi(\|\theta\|^2)+\theta(X)\|\theta\|^2-b \langle JX,\xi\rangle \tr(T),
		\end{split}\end{equation}
		whence 
		\begin{equation}\label{dt}\begin{split}\frac12\di(\|\theta\|^2)=\frac12\xi(\|\theta\|^2)\xi^\flat+\|\theta\|^2\theta+b\tr(T)J\xi^\flat.
		\end{split}\end{equation}
	
	Let us now denote by $\theta_0:=\theta-bJ\xi$ the component of $\theta$ vanishing on $\xi$ and $J\xi$. Using the previous equations and the fact that $\|\theta\|^2=\|\theta_0\|^2+b^2$, we compute:
	\begin{equation*}\begin{split}\frac12\di(\|\theta_0\|^2)&=\frac12\di(\|\theta\|^2-b^2)\stackrel{\eqref{dt}}{=}\frac12\xi(\|\theta\|^2)\xi^\flat+\|\theta\|^2\theta+b\tr(T)J\xi^\flat-b\di b\\
	&=\frac12\xi(\|\theta_0\|^2)\xi^\flat+b\xi(b)\xi^\flat+\|\theta_0\|^2\theta+b^2\theta+b\tr(T)J\xi^\flat-b\di b\\
	&\stackrel{\eqref{t3}}{=}\frac12\xi(\|\theta_0\|^2)\xi^\flat+b\xi(b)\xi^\flat+\|\theta_0\|^2\theta+b^2\theta+b\tr(T)J\xi^\flat-b((TJ\xi)^\flat-\tr(JT)\xi^\flat)\\
	&\stackrel{\eqref{xib}}{=}\frac12\xi(\|\theta_0\|^2)\xi+\|\theta_0\|^2\theta+b^2\theta+b\tr(T)J\xi^\flat-b(TJ\xi)^\flat\\
	&\stackrel{\eqref{t2}}{=}\frac12\xi(\|\theta_0\|^2)\xi^\flat+\|\theta_0\|^2\theta.
	\end{split}\end{equation*}
	This last equation shows that $\|\theta_0\|^2\theta$ vanishes at each critical point of $\|\theta_0\|^2$. In particular, at a point where $\|\theta_0\|^2$ attains its maximum we have $0=\|\|\theta_0\|^2\theta\|^2=\|\theta_0\|^4(b^2+\|\theta_0\|^2)$, thus showing that $\theta_0$ vanishes identically on $M$. 
	
	Consequently, $\theta^\sharp=bJ\xi$, so \eqref{nxi} becomes 
	\begin{equation}\label{nxi1} \nabla_X\xi=b \langle X,\xi\rangle J\xi,\qquad\forall X\in \T M,
	\end{equation}
	which after composing with $J$ yields
	\begin{equation}\label{nxi11} \nabla_XJ\xi=-b \langle X,\xi\rangle\xi,\qquad\forall X\in \T M.
	\end{equation}

	Consider the universal cover $\pi:\widetilde M\to M$, endowed with the Riemannian metric $\widetilde g:=\pi^*g$, with Levi-Civita connection $\widetilde\nabla$, and denote by $\widetilde\xi$ and $\widetilde{J\xi}$ the lifts of $\xi$ and $J\xi$ to $\widetilde M$. If $\widetilde b:=\pi^*(b)$ denotes the pull-back of $b$, \eqref{nxi1} and \eqref{nxi11} become 
	\begin{equation}\label{nxi2} \widetilde\nabla_X\widetilde \xi=\widetilde b\widetilde g(X,\widetilde \xi)\widetilde{J\xi},\qquad \widetilde\nabla_X\widetilde{J\xi}=-\widetilde b\widetilde g(X,\widetilde \xi)\widetilde{\xi},\qquad\forall X\in \T \widetilde M.
	\end{equation}
	In particular we obtain that the distribution spanned by $\widetilde \xi$ and $\widetilde{J\xi}$ is $\widetilde\nabla$-parallel, $\widetilde{J\xi}$ is closed (thus exact) as 1-form on $\widetilde M$, and its integral curves are geodesics, so $\widetilde{J\xi}$ is complete. Since $(\widetilde M,\widetilde g)$ is complete and simply connected, the de Rham decomposition theorem shows that it can be written as a Riemannian product $(M_0,g_0)\times (K,g_K)$ with $(M_0,g_0)$ a complete simply connected Riemannian surface whose tangent bundle in $\widetilde M$ is spanned by $\widetilde \xi$ and $\widetilde{J\xi}$. 
	
	Let $a\in C^\infty(M_0)$ be the primitive of $g_0(\widetilde{J\xi},\cdot)$ which vanishes at some point of $M_0$. Clearly $a:M_0\to\mathbb{R}$ is a submersion. Denote by $C:=a^{-1}(0)$ its level set (which is a 1-dimensional submanifold of $M_0$), and by $(\varphi_t)_{t\in\mathbb{R}}$ the flow of the complete vector field $\widetilde{J\xi}$. Since $\widetilde{J\xi}(a)=1$, we have $\varphi_t(C)=a^{-1}(t)$ for every $t\in\mathbb{R}$. This shows that $(\varphi_t)_*(X)$ is orthogonal to $\widetilde{J\xi}$ for every $t\in\mathbb{R}$ and for every $X\in \T C$. Clearly the map $F:C\times \mathbb{R}\to M_0$ defined by 
$F(x,t):=\varphi_t(x)$ is a smooth bijective immersion, thus a diffeomorphism. In particular $C$ is connected and simply connected, so diffeomorphic to $\mathbb{R}$, whence $M_0$ is diffeomorphic to $\mathbb{R}^2$. We choose a diffeomorphism $f:\mathbb{R}\to C$ and denote by $G:=F\circ(f\times\mathrm{id})$ the corresponding diffeomorphism from $\mathbb{R}^2$ to $M_0$.

Let $(s,t)$ be the standard coordinates in $\mathbb{R}^2$. By construction, the differential of $G$ at every point sends the vector $\frac{\p}{\p s}$ to a multiple of $\widetilde \xi$ and the vector $\frac{\p}{\p t}$ onto $\widetilde{J\xi}$. Consequently, the pull-back metric $G^*g_0$ takes the form $e^{-2\varphi}\di s^2+\di t^2$ for some function $\varphi\in C^\infty(\mathbb{R}^2)$, so $\widetilde\xi$ and $\widetilde{J\xi}$ can  be identified with $e^\varphi\frac{\p}{\p s}$ and $\frac{\p}{\p t}$ respectively. Then $[\widetilde\xi,\widetilde{J\xi}]=-\frac{\p \varphi}{\p t}\widetilde\xi$. On the other hand $\eqref{nxi2}$ gives $[\widetilde\xi,\widetilde{J\xi}]=-\widetilde b\widetilde\xi$, so the pull-back $\widetilde\theta$ to $\widetilde M$ of the Lee form of the Weyl connection $D$ is given by $\widetilde\theta=\widetilde b\widetilde{J\xi}^\sharp=\frac{\p \varphi}{\p t}\di t$. 

Let now $\gamma$ be an element of $\pi_1(M)$. Clearly $\gamma_*$ preserves $\widetilde \xi=e^\varphi\frac{\p}{\p s}$, $\widetilde J\xi=\frac{\p}{\p t}$ and the metric $\widetilde g=e^{-2\varphi}\di s^2+\di t^2+g_K$, so $\gamma$ has the form $\gamma(s,t,x)=(\gamma_0(s),\gamma_1(t),\gamma_K(x))$, where $\gamma_0,\gamma_1\in C^\infty(\mathbb{R})$ and $\gamma_K$ is an isometry of $(K,g_K)$. Moreover, the fact that $(\gamma_0)_*(\frac{\p}{\p t})=\frac{\p}{\p t}$ shows that $\gamma_1'(t)=1$ for all $t\in \mathbb{R}$, so $\gamma_1(t)=t+t_0$ for some $t_0\in \mathbb{R}$.

$(ii) \Longrightarrow (iii)$ Let $\widetilde\nabla$ denote the Levi-Civita connection of the metric $e^{-2\varphi}\di s^2+\di t^2+g_K$ on $\mathbb{R}^2\times K$. Clearly $\T K$ is a $\widetilde\nabla$-parallel complex subbundle of $\T (\mathbb{R}^2\times K)$ of codimension $1$ contained in the kernel of the closed $1$-form of unit length $\di t$. Moreover, the hypotheses imply that $\pi_1(M)$ preserves $\di t$, $\widetilde\nabla$ and $\T K$, so $\di t$ projects to a closed $1$-form of unit length $\eta\in \Omega^1(M)$ and $\T K$ projects to a $\nabla$-parallel complex subbundle of $\T M$ of codimension $1$ contained in the kernel of $\eta$.

$(iii) \Longrightarrow (i)$ Let $\eta\in  \Omega^1(M)$ be a closed 1-form of unit length vanishing on a complex $\nabla$-parallel subbundle $E\subset \T M$ of codimension $1$. Denoting by $\xi:=J\eta^\sharp$ we clearly have $\|\xi\|=1$ and $E^\perp=\mathrm{span}(\xi,J\xi)$. Since $E^\perp$ is $\nabla$-parallel, there exists a 1-form $\alpha\in  \Omega^1(M)$  such that 
\begin{equation}\label{nxi3}\nabla_X\xi=\alpha(X)J\xi,\qquad\nabla_XJ\xi=-\alpha(X)\xi,\qquad\forall X\in \T M.\end{equation}
Moreover, since $\eta=J\xi^\flat$ is closed, we obtain from \eqref{nxi3}  
\begin{equation}\label{nxi4}0=\di\eta=-\alpha\wedge \xi^\flat\end{equation} 
and in particular, applying this last equality to $J\xi$:
\begin{equation}\label{nxi5}\alpha(J\xi)=0.\end{equation}

We claim that the Weyl connection on $(M,g)$ with Lee form $\theta:=J\alpha$ preserves the orthogonal decomposition $\T M=\xi^\perp\oplus\mathbb{R}\xi$. Indeed, for every $X\in \T M$ and $Y\in \xi^\perp$ we can write 
\begin{equation*}\begin{split} \langle D_X\xi,Y\rangle &\stackrel{\eqref{ko}}{=}\langle \nabla_X\xi,Y\rangle+\theta(X)\langle \xi,Y\rangle+\theta(\xi)\langle X,Y\rangle-\langle X,\xi\rangle\theta(Y)\\
&=\alpha(X)\langle J\xi, Y\rangle-\alpha(J\xi)\langle X,Y\rangle+\alpha(JY)\langle X,\xi\rangle\\
&\stackrel{\eqref{nxi5}}{=}(\alpha\wedge\xi^\flat)(JY,X)\stackrel{\eqref{nxi4}}{=}0.
\end{split}\end{equation*}
This shows that $(M,g)$ carries an orientable conformal product structure of rank 1, thus finishing the proof.
\end{proof}

Using Theorem \ref{thm3}, we can easily construct orientable conformal product structures with non-identically zero Lee form on the Riemannian product of certain two-tori with any compact Kähler manifold:

\begin{example}\label{exa}
Let $\varphi:\mathbb{R}^2\to \mathbb{R}$ be a doubly periodic function (\ie satisfying $\varphi(s+1,t)=\varphi(s,t+1)=\varphi(s,t)$ for all $(s,t)\in\mathbb{R}^2$) and let $(K,g_K)$ be any simply connected manifold admitting a discrete co-compact group $\Gamma_K$ of holomorphic isometries (\ie $(K,g_K)$ is the universal cover of a compact Kähler manifold). Then the metric $\widetilde g:=e^{-2\varphi}\di s^2+\di t^2+g_K$ on $\widetilde M:=\mathbb{R}^2\times K$ projects to a Kähler metric $g$ on $M:=\widetilde M/(\mathbb{Z}^2\times\Gamma_K)=T^2\times (K/\Gamma_K)$. By the previous theorem, $(M,g)$ has an orientable conformal product structure of rank 1, whose Lee form with respect to $g$ is the projection to $M$ of $\frac{\p \varphi}{\p t}\di t$. 
\end{example}

On the other hand, not every product of a two-torus and a Kähler manifold carries conformal product structures with non-identically zero Lee form. Indeed, let us first notice the following consequence of our results above.

\begin{cor}\label{cor4.6} If $(M,g,J)$ is a compact Kähler manifold of real dimension $n\ge 4$ carrying an orientable conformal product structure with non-identically zero Lee form, then there exists a geodesic vector field $\zeta$ (\ie satisfying $\nabla_\zeta\zeta=0$) of unit length, such that the distribution spanned by $\zeta$ and $J\zeta$ is $\nabla$-parallel.
\end{cor}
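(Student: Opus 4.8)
The plan is to produce the desired vector field explicitly as $\zeta:=J\xi$, where $\xi$ is the unit field coming from the rank $1$ structure, and to read off all its properties from the structure equations already established in the proof of Theorem~\ref{thm3}. First I would invoke Theorem~\ref{thm1}: since the Lee form is non-identically zero and $n\ge 4$, the given orientable conformal product structure has rank $1$, so its $D$-parallel decomposition is $\T M=T_1\oplus T_2$ with $\dim(T_2)=1$. The orientability hypothesis then yields a globally defined unit vector field $\xi$ spanning $T_2$, and by Theorem~\ref{thm2} the Lee form vanishes on $T_2$. This is precisely the input that drives the computation in the implication $(i)\Rightarrow(ii)$ of Theorem~\ref{thm3}.

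That computation establishes $\theta^\sharp=bJ\xi$ with $b:=\theta(J\xi)$, together with the covariant derivative formulas \eqref{nxi1} and \eqref{nxi11}, namely $\nabla_X\xi=b\langle X,\xi\rangle J\xi$ and $\nabla_XJ\xi=-b\langle X,\xi\rangle\xi$ for all $X\in\T M$. Both right-hand sides lie in $\mathrm{span}(\xi,J\xi)$, so this two-dimensional distribution is $\nabla$-parallel; as it is $J$-invariant, it is exactly the distribution spanned by $\zeta=J\xi$ and $J\zeta=-\xi$. Thus the required parallelism of $\mathrm{span}(\zeta,J\zeta)$ is automatic.

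It then remains to verify that $\zeta=J\xi$ is geodesic of unit length. Unit length is immediate since $J$ is an isometry and $\|\xi\|=1$. For the geodesic condition I would simply specialize \eqref{nxi11} to $X=J\xi$, which gives $\nabla_\zeta\zeta=\nabla_{J\xi}J\xi=-b\langle J\xi,\xi\rangle\xi=0$ because $\xi$ and $J\xi$ are orthogonal. There is no genuine obstacle here: the corollary is a direct consequence of the structure equations in the proof of Theorem~\ref{thm3}. The only point worth flagging is the asymmetry between $\xi$ and $J\xi$---one must take $\zeta=J\xi$, whose integral curves were already seen to be geodesics in that proof, rather than the generator $\xi$ of $T_2$, which in general satisfies $\nabla_\xi\xi=bJ\xi\neq 0$.
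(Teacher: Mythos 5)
Your proposal is correct and follows essentially the same route as the paper: both reduce to rank $1$ via Theorem \ref{thm1}, extract the unit field $\xi$ satisfying \eqref{nxi1} and \eqref{nxi11} from the proof of Theorem \ref{thm3}, and set $\zeta:=J\xi$, reading off the geodesic property and the parallelism of $\mathrm{span}(\xi,J\xi)$ directly from those formulas. Your version merely spells out the intermediate steps (orientability giving a global $\xi$, Theorem \ref{thm2} killing $\theta|_{T_2}$, and the explicit computation $\nabla_{J\xi}J\xi=-b\langle J\xi,\xi\rangle\xi=0$) that the paper leaves implicit.
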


\begin{proof} By Theorem \ref{thm1}, the conformal product structure has rank 1, and by the proof of Theorem \ref{thm3}, there exists a unit vector field $\xi$ on $M$ satisfying \eqref{nxi1} and \eqref{nxi11}. Then the unit vector field $\zeta:=J\xi$ is geodesic and the distribution spanned by $\zeta$ and $J\zeta$ is $\nabla$-parallel.
\end{proof}

By adding a handle to a round 2-sphere and keeping the round metric outside a small disk, V. Matveev \cite{m2024} constructed a metric $g_0$ on the two-torus which carries no geodesic vector field of unit length. Using this result, we then obtain the following:

\begin{prop} \label{vm} Let $(K,g_K)$ be any irreducible compact Kähler manifold of real dimension $m\ge 4$. Then the Riemannian product $(T^2,g_0)\times (K,g_K)$ carries no orientable conformal product structure with non-identically zero Lee form.
\end{prop}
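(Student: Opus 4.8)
The plan is to argue by contradiction, invoking Corollary \ref{cor4.6} to extract a geodesic unit vector field on the torus factor, and then contradict Matveev's result that $(T^2,g_0)$ admits no such field. First I would set $M:=(T^2,g_0)\times(K,g_K)$, note that its real dimension is $m+2\ge 6\ge 4$, and that the product metric is Kähler (the torus carries its flat complex structure, $K$ its given Kähler structure, and the product complex structure is parallel). Suppose for contradiction that $M$ carries an orientable conformal product structure with non-identically zero Lee form. By Corollary \ref{cor4.6}, there is a geodesic unit vector field $\zeta$ on $M$ such that the distribution $\Pi:=\mathrm{span}(\zeta,J\zeta)$ is $\nabla$-parallel.

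The key step is to show that $\Pi$ must be tangent to the torus factor. Since $(K,g_K)$ is irreducible, the de Rham decomposition of $(M,g)=(T^2,g_0)\times(K,g_K)$ has exactly the two factors as its irreducible (and flat) pieces: the flat torus contributes a $2$-dimensional flat factor and $K$ is irreducible by hypothesis. Any $\nabla$-parallel subbundle of $\T M$ must be compatible with the holonomy representation, hence must be a sum of the parallel pieces of the tangent bundle. The holonomy of $(M,g)$ acts trivially on $\T T^2$ and irreducibly on $\T K$, so the only $2$-dimensional $\nabla$-parallel subbundle is $\T T^2$ itself (a $J$-invariant $2$-plane inside $\T K$ would give a proper parallel subbundle of $\T K$, contradicting irreducibility; a mixed subbundle is likewise excluded). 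Therefore $\Pi=\T T^2$, which means $\zeta$ is everywhere tangent to the torus factor.

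It remains to see that $\zeta$, viewed as a vector field on the torus, is geodesic for $g_0$. Since $\Pi=\T T^2$ is $\nabla$-parallel, the restriction of the Levi-Civita connection $\nabla$ of the product metric to sections of $\T T^2$ coincides with the Levi-Civita connection $\nabla^{0}$ of $g_0$ pulled back along the projection; in particular $\nabla_\zeta\zeta=\nabla^{0}_\zeta\zeta$ along the torus directions. As $\zeta$ is parallel-transported within $\T T^2$ and $\nabla_\zeta\zeta=0$, it descends (being independent of the $K$-coordinate by the parallelism and the product structure, or after restricting to a torus slice $T^2\times\{x_0\}$) to a unit-length geodesic vector field on $(T^2,g_0)$. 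This contradicts Matveev's theorem \cite{m2024}, according to which $(T^2,g_0)$ admits no geodesic vector field of unit length.

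I expect the main obstacle to be the second step: making rigorous the passage from ``$\Pi$ is a $2$-dimensional $\nabla$-parallel subbundle'' to ``$\Pi=\T T^2$'' and then to a genuine geodesic field on the torus alone. One must be careful that a priori $\Pi$ could vary over the $K$-factor; the parallelism combined with the de Rham/holonomy argument is what pins it to the flat factor and guarantees that $\zeta$ restricts to a well-defined unit geodesic field on each torus slice. Once $\Pi=\T T^2$ is established, the descent of $\zeta$ and the final contradiction are routine.
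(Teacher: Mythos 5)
Your proposal is correct and follows essentially the same route as the paper: contradiction via Corollary \ref{cor4.6}, the observation that irreducibility of $(K,g_K)$ forces the $\nabla$-parallel $2$-plane field $\mathrm{span}(\zeta,J\zeta)$ to coincide with $\T T^2$, and restriction of $\zeta$ to a totally geodesic slice $T^2\times\{x\}$ to contradict Matveev's result. The paper states the holonomy step in one line where you elaborate it (correctly, via the projection of an invariant subspace onto the irreducible factor), but there is no substantive difference.
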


\begin{proof} Assume for a contradiction that $(M,g):=(T^2,g_0)\times (K,g_K)$ carries an orientable conformal product structure $D$ with non-identically zero Lee form.
By Corollary \ref{cor4.6}, there exists a geodesic vector field $\zeta$ of unit length on $(M,g)$ such that the distribution spanned by $\zeta$ and $J\zeta$ is $\nabla$-parallel. The hypothesis implies that the only $\nabla$-parallel distribution of dimension $2$ on $(M,g)$ is the tangent distribution to $T^2$. Restricting $\zeta$ to any leaf $T^2\times\{x\}$ for some $x\in K$, defines a geodesic vector field of unit length on $(T^2,g_0)$, which contradicts the above property of the metric $g_0$.
\end{proof}

\begin{remark} By \cite[Theorem 4.3]{bfm2023}, there exists no LCP structure on compact Kähler manifolds. Thus, a Weyl connection defining a conformal product structure on a compact Kähler manifold is either exact or non-closed. The former case corresponds in Example \ref{exa} to the case where the function $\varphi$ only depends on the variable $t$. In this case the manifold $(M,e^{2\varphi}g)$ from Example \ref{exa} is globally conformally Kähler and has a non-trivial parallel vector field (the projection of $\frac{\p}{\p s}$). It was shown in \cite[Theorem 3.5]{m2015} that all compact conformally Kähler manifolds carrying a non-trivial parallel vector field are obtained by this construction.
\end{remark}

\end{document}